\newtheorem{lemma}{Lemma}[section]
\newtheorem{theorem}[lemma]{Theorem}
\numberwithin{equation}{section}
\def\NN{{\mathbb N}}
\def\Hom{\operatorname {Hom}}
\def\1I{\operatorname {I}}
\def\2I{\operatorname {II}}
\def\Tor{\operatorname {Tor}}
\def\dim{\operatorname{dim}}
\def\Fdim{{\sf Fdim}}
\def\Gr{{\sf Gr}}
\def\Hom{\operatorname{Hom}}
\def\id{\operatorname{id}}
\def\liminj{\varinjlim}
\def\Projnc{\operatorname{Proj}_{nc}}
\def\QGr{\operatorname{\sf QGr}}
\def\Spec{\operatorname{Spec}}
\def\G{\mathop{\underline{\underline{\it \Gamma}}}\nolimits}
\def\d{\downarrow}
\def\a{\alpha}
\def\b{\beta}
\def\c{\gamma}
\def\d{\delta}
\def\g{\gamma}
\def\ve{\varepsilon}
\def\G{\Gamma}
\def\sA{{\sf A}}
\def\sB{{\sf B}}
\def\sS{{\sf S}}
\def\sT{{\sf T}}
\def\cL{{\mathcal L}}
\def\cM{{\mathcal M}}
\def\cN{{\mathcal N}}
\def\cO{{\mathcal O}}
\def\Qcoh{{\sf Qcoh}}
\newdimen\uboxsep \uboxsep=1ex
\def\uboxn#1{\vtop to 0pt{\hrule height 0pt depth 0pt\vskip\uboxsep
\hbox to 0pt{\hss #1\hss}\vss}}
\def\uboxs#1{\vbox to 0pt{\vss\hbox to 0pt{\hss #1\hss}
\vskip\uboxsep\hrule height 0pt depth 0pt}}
\def\hoek{\hbox{\vtop{\hbox{\vrule\phantom{xx}\vrule}\hrule}\kern -0.4pt}}
\def\dirlim{\mathop{\vtop{\baselineskip -100pt\lineskip -1pt\lineskiplimit 0pt
\setbox0\hbox{lim}\copy0\hbox to \wd0{\rightarrowfill}}}\limits}
\def\invlim{\mathop{\vtop{\baselineskip -100pt\lineskip -1pt\lineskiplimit 0pt
\setbox0\hbox{lim}\copy0\hbox to \wd0{\leftarrowfill}}}\limits}
\def\I11{{1 \kern -0.8pt \! \mbox{l}}}
\def\mumu{{\mu\kern-4.2pt\mu}}
\def\boxtimes{\setbox0\hbox{$\Box$}\copy0\kern-\wd0\hbox{$\times$}}
\begin{document}

\title[Corrigendum: Maps between non-commutative spaces]{Corrigendum to  
``Maps between non-commutative spaces''  
[Trans. Amer. Math. Soc.,
356(7) (2004) 2927--2944]}

\author{S. Paul Smith}

\address{ Department of Mathematics, Box 354350, Univ.
Washington, Seattle, WA 98195}

\email{smith@math.washington.edu}

\keywords{Closed subspaces, non-commutative algebraic geometry}

\subjclass{14A22}

\begin{abstract}
The statement of Lemma 3.1 in the published paper is not correct. Lemma 3.1 is needed for the proof of 
Theorem 3.2. Theorem 3.2 as originally stated is  true but its ``proof'' is not correct. 
Here we change the statements and proofs of Lemma 3.1 and Theorem 3.2. 
We also prove a new result. Let $k$ be a field, $A$ a left and right noetherian 
$\NN$-graded $k$-algebra such that $\dim_k(A_n)< \infty$ for all $n$, and $J$ a graded two-sided ideal of $A$. 
If the non-commutative scheme $\Projnc(A)$ is isomorphic to a projective scheme $X$, then there is a closed subscheme
$Z \subseteq X$ such that  $\Projnc(A/J)$ is isomorphic to $Z$. This result is a geometric translation of what we actually prove: if the category $\QGr(A)$ is equivalent to $\Qcoh(X)$, then $\QGr(A/J)$ is equivalent to $\Qcoh(Z)$ for
some closed subscheme $Z \subseteq X$.
\end{abstract}

\maketitle

\pagenumbering{arabic}

\section{Replacements for \cite[Lem. 3.1 and Thm. 3.2]{S}}

Lemma 3.1 and its proof in \cite{S} should be replaced by Lemma \ref{lem.quotient.adjoint} below.

Lemma \ref{lem.quotient.adjoint} is inspired by a result of P. Gabriel  \cite[Cor. 2, p. 368]{Gab} which says that under suitable conditions, when the functor $F$ in (\ref{comm.diag}) is 
exact and $F(\sS) \subseteq \sT$,  then there is a unique functor $G:\sA/\sS \to \sB/\sT$
that makes  (\ref{comm.diag}) commute. 
 
We assume that all our categories have small Hom sets.

\begin{lemma}
\label{lem.quotient.adjoint}
Let $\sA$ and $\sB$ be Grothendieck categories with localizing
subcategories $\sS \subseteq \sA$ and $\sT \subseteq \sB$.
Let $\pi:\sA \to \sA/\sS$ and
$\pi':\sB \to \sB/\sT$ be the quotient functors, and let $\omega$
and $\omega'$ be right adjoints to $\pi$ and $\pi'$ respectively. 
Consider the following diagram of functors:
\begin{equation}
\label{comm.diag}
\UseComputerModernTips
\xymatrix{
\sA \ar[r]^F  \ar[d]_{\pi}  &   \sB \ar[d]^{\pi'}
\\
\sA/\sS & \sB/\sT.
}
\end{equation}
Suppose that $F$ is right exact and $F(\sS) \subseteq \sT$. 
If $L_1(\pi'F)$, the first left derived functor of $\pi'F$, exists, which it does when $\sA$ has enough projectives, 
and vanishes on $\sS$, then 
\begin{enumerate}
  \item 
  the natural transformation $\pi'F \to \pi'F\omega\pi$ induced by the natural transformation
$\eta:\id_{\sA} \to \omega\pi$ is an isomorphism;
  \item 
  there is a unique functor $G:\sA/\sS \to \sB/\sT$ such that
$\pi'F=G\pi$; furthermore, $G\cong \pi'F\omega$;
  \item 
  $G$ is right exact;
  \item{}
 if $F$ commutes with direct sums so does $G$;
\item{}
if $F$ has a right adjoint so does $G$: if $F\dashv H$,  then $G \dashv \pi H\omega'$.  
\end{enumerate} 
\end{lemma}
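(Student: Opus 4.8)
The plan is to reduce everything to one computational fact: \emph{under the hypotheses, $\pi'F$ carries every morphism of $\sA$ whose kernel and cokernel both lie in $\sS$ to an isomorphism of $\sB/\sT$.} I would prove this first. Given such an $f\colon M'\to M$, factor it as an epimorphism $M'\twoheadrightarrow \im f$ followed by a monomorphism $\im f\hookrightarrow M$; since the kernel of the first factor and the cokernel of the second factor both lie in $\sS$, it is enough to treat two cases. If $f$ is a monomorphism with cokernel $C\in\sS$, the relevant segment of the long exact sequence for the left derived functors of $\pi'F$ is $L_1(\pi'F)(C)\to\pi'F(M')\to\pi'F(M)\to\pi'F(C)\to 0$, whose outer terms vanish: the left one because $L_1(\pi'F)$ vanishes on $\sS$, the right one because $FC\in\sT$ forces $\pi'F(C)=0$. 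If $f$ is an epimorphism with kernel $K\in\sS$, the segment $\pi'F(K)\to\pi'F(M')\to\pi'F(M)\to 0$ does the job because $\pi'F(K)=\pi'(FK)=0$. (Throughout one uses that $\pi'$ is exact and $F$ right exact, so $\pi'F$ is right exact with a well-behaved long exact sequence.)

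Part (1) is then immediate: the unit $\eta_M\colon M\to\omega\pi M$ has kernel (the $\sS$-torsion subobject of $M$) and cokernel in $\sS$, and its image under $\pi'F$ is exactly the component at $M$ of the natural transformation in question, hence an isomorphism. For part (2) I would invoke the universal property of the Gabriel quotient \cite{Gab}: $\pi\colon\sA\to\sA/\sS$ is the identity on objects and is the localization of $\sA$ at precisely the class of morphisms with kernel and cokernel in $\sS$, so any functor inverting that class — in particular $\pi'F$, by the previous paragraph — factors uniquely as $G\pi$. Composing with $\omega$ and using that the counit $\pi\omega\to\id_{\sA/\sS}$ is an isomorphism gives $G\cong G\pi\omega=\pi'F\omega$; additivity of $G$ is automatic from the additive form of the universal property.

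Parts (3) and (4) are short. For (3): $G\pi=\pi'F$ is right exact since $\pi'$ is exact and $F$ right exact; moreover every short exact sequence in $\sA/\sS$ is, up to isomorphism, the image under the exact functor $\pi$ of one in $\sA$ (lift the three objects through $\omega$, take an image to make the third map epi, and check its $\pi$ agrees with the original cokernel), so right exactness passes from $G\pi$ to $G$. For (4): $\pi$ is surjective on objects and, being a left adjoint, preserves coproducts, as do $\pi'$ and, by hypothesis, $F$; writing a family in $\sA/\sS$ as $\pi$ of a family in $\sA$ then yields $G\bigl(\coprod_i\pi X_i\bigr)=\pi'F\bigl(\coprod_i X_i\bigr)=\coprod_i\pi'FX_i=\coprod_i G\pi X_i$, the identification being the canonical comparison map.

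Part (5) is where I expect the real difficulty. Since $\pi'F\dashv H\omega'$ as a composite of the adjunctions $F\dashv H$ and $\pi'\dashv\omega'$, we get, naturally in $A\in\sA$ and $Y\in\sB/\sT$,
\[
\Hom_{\sB/\sT}(G\pi A,Y)=\Hom_{\sB/\sT}(\pi'FA,Y)\cong\Hom_\sB(FA,\omega'Y)\cong\Hom_\sA(A,H\omega'Y).
\]
To rewrite the last group as $\Hom_{\sA/\sS}(\pi A,\pi H\omega'Y)\cong\Hom_\sA(A,\omega\pi H\omega'Y)$ I must show $H\omega'Y$ is $\sS$-closed, i.e.\ that $\eta_{H\omega'Y}$ is invertible — and this is the crux. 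The trick is that an object $N$ is $\sS$-closed exactly when $\Hom_\sA(-,N)$ inverts every morphism with kernel and cokernel in $\sS$; but the display identifies $\Hom_\sA(-,H\omega'Y)$ with $\Hom_{\sB/\sT}(\pi'F(-),Y)$, which inverts that class because $\pi'F$ does, by the first paragraph. Hence $H\omega'Y$ is $\sS$-closed, the chain of isomorphisms closes up on objects of the form $\pi A$ — hence on all objects, since $\pi$ is surjective on objects — and naturality over all of $\sA/\sS$ follows because restriction along $\pi$ is fully faithful on functors out of $\sA/\sS$. This gives $G\dashv\pi H\omega'$. The single real obstacle is thus the verification that $H$ carries $\sT$-closed objects to $\sS$-closed objects, which is why I route it through the "inverts the class" characterization rather than a direct $\Ext$ computation.
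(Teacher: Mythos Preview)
Your argument is correct, and in two places it takes a genuinely different route from the paper.

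For (1) you abstract the essential computation into a lemma---$\pi'F$ inverts every morphism with kernel and cokernel in $\sS$---and then apply it to the unit $\eta_a$. The paper carries out the same computation inline: it splits the four-term sequence $0\to\Gamma a\to a\to\omega\pi a\to R^1\Gamma a\to 0$ into two short exact sequences and applies $\pi'F$ to each. Same content, different packaging; your formulation has the advantage of being reusable in (5).

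For (2) the paper explicitly notes that Gabriel's \cite[Cor.~2, p.~368]{Gab} requires the functor to be \emph{exact}, so it cannot be quoted, and instead reproduces Gabriel's construction of $G$ by hand via the direct-limit description of $\Hom$ in $\sA/\sS$, checking that the map $\psi$ in the relevant square is bijective. Your appeal to the universal property of $\sA/\sS$ as a localization at the class $\Sigma$ is valid and shorter, but be aware that this is not how the result is stated in \cite{Gab}; the calculus-of-fractions formulation and its universal property for arbitrary functors is Gabriel--Zisman rather than Gabriel.

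Parts (3) and (4) coincide with the paper's arguments.

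Part (5) is where the two proofs diverge most. The paper never verifies that $H\omega'Y$ is $\sS$-closed. Instead it observes that $G$ preserves colimits by (3) and (4), invokes the Special Adjoint Functor Theorem to produce \emph{some} right adjoint $G'$, and then identifies it: from $G\pi=\pi'F$ one gets $\omega G'\cong H\omega'$ by uniqueness of adjoints, whence $G'\cong\pi\omega G'\cong\pi H\omega'$. Your route is direct and constructive: you show $H\omega'Y$ is $\sS$-closed via the characterization ``$N$ is $\sS$-closed iff $\Hom_\sA(-,N)$ inverts $\Sigma$,'' which meshes perfectly with your opening lemma. Your approach avoids SAFT and gives extra information (that $H\omega'$ lands in $\sS$-closed objects); the paper's approach avoids the closedness check and the bookkeeping needed to descend naturality along $\pi$. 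Both work.
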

\begin{proof}
{\bf (1)}
Let $a \in \sA$. Let $\G:\sA \to \sS$ be the functor that  sends an object in $\sA$ to the sum of its subobjects that belong to $\sS$.
The exact sequence
$$
0 \longrightarrow \G a \longrightarrow a \stackrel{\eta_a}{\longrightarrow} \omega\pi a \longrightarrow (R^1\G)a \longrightarrow 0
$$
is a ``composition'' of exact sequences 
\begin{align*}
& 0 \longrightarrow \G a \longrightarrow a \stackrel{\a}{\longrightarrow} x \longrightarrow 0 \qquad  \qquad \phantom{xxx} \hbox{and}
\\
& 0 \longrightarrow x  \stackrel{\b}{\longrightarrow} \omega\pi a \longrightarrow (R^1\G)a \longrightarrow 0
\end{align*}
in which $\b\a=\eta_a$. Applying $\pi'F$ to these sequences gives  exact sequences 
$$
\xymatrix{
 \pi'F( \G a) \ar[r]  & \pi'Fa \ar[rr]^{(\pi'F)\a} &&  \pi'Fx \ar[r] & 0  
}
$$
and
$$
\xymatrix{
 \big(L_1(\pi'F)\big)\big( (R^1\G)a\big) \ar[r]&  \pi'Fx   \ar[rr]^{(\pi'F)\b} &&  \pi' F\omega\pi a \ar[r] &  \pi'F\big((R^1\G)a\big) \ar[r] & 0.
}
$$
Since $\pi'F$ and $L_1(\pi'F)$ vanish on $\sS$,  $\pi' F( \G a) = \big(L_1(\pi'F)\big)\big( (R^1\G)a\big) =  \pi'F\big((R^1\G)a\big) =0$.  Hence $(\pi'F)\a$ and $(\pi'F)\b$ are isomorphisms. But $(\pi'F)\b \circ (\pi'F)\a=(\pi'F)(\b \circ \a)=(\pi'F)\eta_a$, so $(\pi'F)\eta_a$ is an isomorphism.

{\bf (2)}
 We will now show there is a unique functor $G:\sA/\sS \to \sB/\sT$ such that $\pi'F=G\pi$. If $\pi'F$ were exact, the existence of 
 a unique such $G$ is proved by Gabriel \cite[Cor.2, p.368]{Gab}. We can't appeal to op. cit. because we only know that 
 $\pi'F$ is right exact (and $L_1(\pi'F)$ vanishes on $\sS$). Nevertheless, the proof at loc. cit. can be modified to establish the 
 existence and uniqueness of $G$ such that $\pi'F=G\pi$. (Our notation differs from Gabriel's. Our $\pi'F$ plays the role of his
 $G$ and our $G$ plays the role of his $H$.)
 
 Let $m$ and $n$ be objects in $\sA$. Let $i:m' \to m$ be a subobject and $p:n \to n/n'$ a quotient object such that 
 $m/m'$ and $n'$ belong to $\sS$. The maps  $\varphi$ and $\psi$ defined by
 $\varphi(\a):=p\a i$ and $\psi(\b):=\pi'F(p) \circ \b \circ \pi'F(i)$ fit into a commutative diagram
 \begin{equation}
 \label{defn.psi}
 \xymatrix{
\ar[d]_\varphi  \Hom_{\sA}(m,n) \ar[rr]^{\pi'F} &&  \Hom_{\sB/\sT}(\pi'Fm,\pi'Fn) \ar[d]^{\psi}
\\ 
 \Hom_{\sA}(m',n/n') \ar[rr]_{\pi'F} && \Hom_{\sB/\sT}(\pi'Fm',\pi'F(n/n')).
 }
 \end{equation}
 
In order to show that $\psi$ is surjective, suppose that $\d \in  \Hom_{\sB/\sT}(\pi'Fm',\pi'F(n/n'))$. The rows in the following
diagram are exact:
$$
\xymatrix{
0 = \pi'F(n') \ar[r] & \pi'F(n) \ar[rr]^{\pi'F(p)} && \pi'F(n/n') \ar[r] & 0
\\
 0 = L_1(\pi'F)(m') \ar[r] & \pi'F(m') \ar[urr]^\d \ar[rr]_{\pi'F(i)}  && \pi'F(m) \ar[r] & \pi'F(m/m') = 0.
 }
 $$
 Because the arrows $\pi'F(n) \to \pi'F(n/n')$ and $ \pi'F(m') \to \pi'F(m)$ are isomorphisms there is a unique morphism 
 $\b:\pi'F(m) \to \pi'F(n)$ such that 
 $$
\xymatrix{
 \pi'F(n) \ar[rr]^{\pi'F(p)} && \pi'F(n/n')  
\\
\ar[u]^\b  \pi'F(m) && \pi'F(m') \ar[ll]^{(\pi'F)(i)}.  \ar[u]_\d 
 }
 $$
In other words, $\psi(\b)=\d$. Thus, $\psi$ is surjective. The uniqueness of $\b$ implies that $\psi$ is also injective,
and therefore bijective. 

As $m$ and $n$ run over all objects in $\sA$, 
the maps 
$$
\psi^{-1} \circ \pi'F: \Hom_{\sA}(m',n/n') \longrightarrow \Hom_{\sB/\sT}(\pi'Fm,\pi'Fn),
$$
as $m'$ and $n'$ take all possible values, determine maps
$$
G: \Hom_{\sA/\sS}(\pi m, \pi n) =  \liminj_{m',n'} \Hom_{\sA}(m',n/n') \longrightarrow \Hom_{\sB/\sT}(\pi'Fm,\pi'Fn)
$$
for all objects $\pi m$ and $\pi n$ in $\sA/\sS$. If we define $G(\pi m)=\pi'F(m)$ for all objects $\pi m \in \sA/\sS$, then $G$ becomes
a functor $\sA/\sS \to \sB/\sT$. By definition, $G\pi=\pi' F$ on objects in $\sA$. 

Suppose $\a:m \to n$ is a morphism in $\sA$. Let $m'=m$ and $n'=0$. The maps $i$ and $p$ are the identity morphisms
on $m$ and $n$ so the map $\psi$ in (\ref{defn.psi}) is the identity; hence $G \pi(\a)=\psi^{-1}\pi'F(\a)=\pi'F(\a)$. 
The functors $G\pi$ and $\pi'F$ are therefore equal. 

To prove the uniqueness of $G$, suppose that  $G':\sA/\sS \to \sB/\sT$ is ``another'' functor such that  $G'\pi=\pi' F$.
If $x$ is an object in $\sA/\sS$, then $x=\pi m$ for some object $m$ in $\sA$ so $G'(x)=G'\pi(m)=\pi F(m)=G\pi(m)=G(x)$.
If $\theta$ is a morphism in $\sA/\sS$, then $\theta=\pi(\a)$ for some morphism $\a$ in $\sA$ so
$G'(\theta)=G'\pi(\a)=\pi' F(\a)=G\pi(\a)=G(\theta)$. Hence $G'=G$. 

Finally, since $G\pi=\pi'F$, $\pi' F \omega = G \pi\omega \cong G$.

{\bf (3)}
By \cite[Cor. 1, p.368]{Gab}, every short exact sequence in $\sA/\sS$ is isomorphic to one of the form
\begin{equation}
\label{eq.G.rt.exact}
  \xymatrix{
  0 \ar[r] & \pi a \ar[rr]^{\pi(\a)} && \pi b \ar[rr]^{\pi(\b)} && \pi c \ar[r] & 0
  }
\end{equation}
where   
$
  \xymatrix{
  0 \ar[r] &  a \ar[r]^\a & b \ar[r]^\b &  c \ar[r] & 0
  }
$
is an exact sequence in $\sA$. Because $\pi'F$ is right exact, 
$$
  \xymatrix{
   \pi'Fa \ar[rr]^{(\pi'F)(\a)} && \pi'Fb \ar[rr]^{(\pi'F)\b} &&  \pi'Fc \ar[r] & 0
  }
$$
is exact.
There is a commutative diagram
$$
  \xymatrix{
   \pi'Fa \ar[d]_{(\pi'F)(\eta_a)}  \ar[rr]^{(\pi'F)\a}&& \pi'Fb \ar[rr]^{(\pi'F(\b)}  \ar[d]^{(\pi'F)(\eta_b)}  &&  \pi'Fc   \ar[d]^{(\pi'F)(\eta_c)}  \ar[r] & 0
   \\
  \pi'F\omega \pi a \ar@{=}[d]  \ar[rr]_{(\pi'F\omega \pi)( \a)}&& \pi'F\omega \pi  b  \ar@{=}[d]  \ar[rr]_{(\pi'F\omega \pi)( \b)}  &&  \pi'F\omega \pi  c   \ar@{=}[d]   \ar[r] & 0
     \\
G(\pi a)   \ar[rr]_{G(\pi \a)}&& G( \pi  b)  \ \ar[rr]_{G(\pi \b)}  &&  G(\pi  c)    \ar[r] & 0
  }
$$
in which, by (1),  the upper-most vertical arrows provide an isomorphism from the top row to the middle row. 
The bottom row in the diagram is therefore exact. But the bottom row is 
obtained by applying $G$ to (\ref{eq.G.rt.exact}) so $G$ is, as claimed, right exact.

{\bf (4)}
Let $x_i$, $i \in I$, be a collection of objects in $\sA/\sT$. There are objects $a_i \in \sA$ such that $x_i=\pi(a_i)$ for all $i$.
Thus
$$
G\bigg(\bigoplus_{i \in I} x_i\bigg) = G\bigg(\bigoplus_{i \in I} \pi a_i\bigg) \cong G\pi\bigg(\bigoplus_{i \in I} a_i\bigg) 
=
\pi'F\bigg(\bigoplus_{i \in I} a_i\bigg) = \bigoplus_{i \in I} \pi'Fa_i.
$$ 
But $\pi'F=G\pi$ so  
$$
 \bigoplus_{i \in I} \pi'Fa_i =  \bigoplus_{i \in I} G\pi a_i =  \bigoplus_{i \in I} Gx_i.
$$
Therefore $G$ commutes with direct sums. 

{\bf (5)}
Suppose $F \dashv H$. Because $F$ has a right adjoint it commutes with direct sums.
By (4), $G$ also commutes with direct sums.  Since $\sA$ is a Grothendieck category so is $\sA/\sS$. In particular, $\sA/\sS$
is cocomplete, has a generator, and has small Hom sets. 
Thus, by the Special Adjoint Functor Theorem, the dual of \cite[Cor.V.8, p.130]{Mac}, $G$ has a right adjoint, $G'$ say. Since $\pi \dashv \omega$  and $G \dashv G'$, $G \pi\dashv \omega G'$.
But $G\pi =\pi'F$ so $\pi'F\dashv \omega G'$; but $H\omega'$ is also right adjoint to $\pi'F$ so 
$\omega G' \cong H\omega'$ whence  $G' \cong \pi\omega G' \cong \pi H \omega'$. 
 Thus, $G \dashv \pi H \omega'$, as claimed.
\end{proof}

{\bf Remark.}
With regard to Lemma \ref{lem.quotient.adjoint}(2), 
the hypothesis that $L_1(\pi'F)$ vanishes on $\sS$ is not only sufficient to prove there is a $G$ such that $G\pi=\pi'F$;
it is also necessary. If there is a $G$ such that $G\pi=\pi'F$, then $\pi'F(\sS)=0$ because $\pi(\sS)=0$ and, because $\pi$ is exact, $L_1(\pi'F) =  L_1(G\pi) \cong (L_1G) \circ \pi$ which shows that $L_1(\pi'F)$ vanishes on $\sS$. 

\bigskip

From now on $k$ denotes an arbitrary field.
 
In the next result $A$ is a right and left noetherian $\NN$-graded $k$-algebra such
that $\dim_k(A_n)< \infty$ for all $n$ and $J$ is a graded ideal in $A$. 

We write $\Gr(A)$ for the category of graded right $A$-modules, $\Fdim(A)$ for the full subcategory of $\Gr(A)$
consisting of modules that are the sum of their finite-dimensional submodules, and $\QGr(A)$ for the quotient category 
$\Gr(A)/\Fdim(A)$.

The inclusion functor $f_*:\Gr(A/J) \to \Gr(A)$ has a left adjoint $f^*$ and a right adjoint $f^!$.

Theorem 3.2 and its proof in \cite{S} should be replaced by the following result.

\begin{theorem}
\label{thm.closed.proj}
\label{thm.3.2}
Let $J$ be a graded ideal in an $\NN$-graded $k$-algebra $A$ and consider the diagram
$$
\xymatrix{
\Gr (A/J) \ar[r]^{f_*}  \ar[d]^{\pi'}  & \Gr (A)  \ar[d]_{\pi}
\\ 
\QGr(A/J) \ar@/^1pc/[u]^{\omega'}  &  \QGr(A)  \ar@/_1pc/[u]_{\omega}
}
$$
in which $\pi$ and $\pi'$ denote the quotient functors and $\omega$ and $\omega'$ are their right adjoints.
Define $i^*= \pi' f^*\omega$ and $i^!= \pi' f^!\omega$. Then 
\begin{enumerate}
\item{}
there is a unique functor $i_*:\QGr(A/J) \to \QGr(A)$ such that $i_* \pi' = \pi f_*$; furthermore, $i_*$ is exact;
\item{}
 $i_* \cong \pi f_*\omega'$;
   \item 
  $i_*$ is fully faithful;
  \item 
  $i^*$ is left adjoint and $i^!$ right adjoint to $i_*$; 
  \item 
  the essential image of $i_*$ is closed under quotients and submodules.
\end{enumerate}
In the language of \cite{S},  $\Projnc (A/J)$ is a closed subspace of $\Projnc(A)$ and the homomorphism $A \to A/J$ induces a closed immersion 
$i:\Projnc (A/J) \to \Projnc( A)$. 
\end{theorem}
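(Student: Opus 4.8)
The plan is to obtain everything by applying Lemma \ref{lem.quotient.adjoint} to the functor $F = f_*\colon \Gr(A/J) \to \Gr(A)$, with $\sA = \Gr(A/J)$, $\sB = \Gr(A)$, $\sS = \Fdim(A/J)$, and $\sT = \Fdim(A)$. First I would check the hypotheses of the lemma. The categories $\Gr(A/J)$ and $\Gr(A)$ are Grothendieck categories, and $\Fdim(A/J)$, $\Fdim(A)$ are localizing subcategories (this is standard; it is where left and right noetherianness together with $\dim_k A_n < \infty$ is used). The functor $f_*$ is exact, hence right exact, and a finite-dimensional graded $A/J$-module is carried to a finite-dimensional graded $A$-module, so $f_*(\sS) \subseteq \sT$. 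Since $f_*$ is exact and $\Gr(A/J)$ has enough projectives, $L_1(\pi'f_*)$ exists; and because $f_*$ is exact, $L_1(\pi' f_*) \cong L_1(\pi') \circ f_*$, which vanishes on all of $\Gr(A/J)$ since $\pi'$ is exact — in particular it vanishes on $\sS$. So Lemma \ref{lem.quotient.adjoint} applies.

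Now I read off the conclusions. Part (2) of the lemma gives a unique functor $i_*\colon \QGr(A/J) \to \QGr(A)$ with $\pi f_* = i_*\pi'$, and $i_* \cong \pi f_* \omega'$; this is exactly parts (1) (existence and uniqueness) and (2) of the theorem. For the exactness claim in (1): by part (3) of the lemma $i_*$ is right exact, and since $f_*$ is actually exact, the dual argument (or: $f_*$ left exact gives, via $\omega'$ left exact and $\pi$ exact, that $i_* \cong \pi f_* \omega'$ is left exact) shows $i_*$ is exact. Part (4) of the theorem is part (5) of the lemma applied twice: $f_* \dashv f^!$ gives $i_* \dashv \pi f^! \omega = i^!$ directly; for the left adjoint, apply the lemma's part (5) in the opposite categories — $f^* \dashv f_*$ means $f_*$ has a left adjoint, equivalently $f_*^{\mathrm{op}}$ has a right adjoint, and the dual of part (5) yields $i^* = \pi' f^* \omega \dashv i_*$. (Alternatively, since $i_* \cong \pi f_* \omega'$, one computes adjoints directly: $\pi \dashv \omega$, $f_* \dashv f^!$, $\omega' \dashv \pi'$ compose to give $i_* \dashv \pi f^!\omega$ — wait, one must be careful about which side; I would simply invoke part (5) of the lemma and its formal dual.)

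The main obstacle is part (3), full faithfulness of $i_*$, which is \emph{not} formal from Lemma \ref{lem.quotient.adjoint} and needs the specific structure here. The key point is that $f^* f_* \cong \id_{\Gr(A/J)}$ (the composite of extension of scalars along $A \to A/J$ with restriction is the identity), and this passes to the quotient categories: I would show $i^* i_* \cong \id_{\QGr(A/J)}$, using $i^* i_* = \pi' f^* \omega \,\pi f_* \omega' \cong \pi' f^* \omega\pi f_*\omega'$ together with the fact (from Lemma \ref{lem.quotient.adjoint}(1), or rather its analogue for $f^*$ which is also right exact) that the unit $f_* \to \omega\pi f_*$ becomes an isomorphism after applying $\pi' f^*$; combined with $f^* f_* \cong \id$ and $\pi'\omega' \cong \id$ this gives $i^*i_* \cong \id$. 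Since $i^* \dashv i_*$ — wait, $i^*$ is the \emph{left} adjoint of $i_*$, so $i^* i_* \cong \id$ says the counit of the adjunction $i^* \dashv i_*$... no: if $i^* \dashv i_*$ then $i^* i_*$ carries a comonad structure and $i_*$ fully faithful is equivalent to the unit $\id \to i_* i^*$... I would instead use: a right adjoint $i_*$ is fully faithful iff the counit $i^* i_* \to \id$ (of $i^* \dashv i_*$) is an isomorphism; establishing $i^* i_* \cong \id$ naturally (via the chain above) therefore gives (3). Finally, part (5): the essential image of a fully faithful exact right adjoint between Grothendieck categories whose left adjoint is a localization (here $i^*$ is a quotient-type functor, being built from $\pi'$ and the exact localization-compatible $f^*$) is closed under subobjects and quotients — concretely, if $N \hookrightarrow i_* M$ in $\QGr(A)$, apply $i^*$ to get $i^* N \hookrightarrow i^* i_* M \cong M$ (using $i^*$ left exact — which holds since $f^*$ is exact here), and then $i_* i^* N \hookrightarrow i_* M$; one checks $N \to i_* i^* N$ is an isomorphism using that $i_* i^*$ is idempotent and $i_* M$ lies in the image, so $N$ is in the essential image; the argument for quotients is dual using right exactness of $i^*$. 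The geometric restatement is then immediate from the definitions in \cite{S}: $\QGr(A) \cong \Qcoh(X)$ identifies $\Projnc(A)$ with $X$, and the fully faithful exact $i_*$ with image closed under subquotients exhibits $\QGr(A/J)$ as $\Qcoh$ of a closed subscheme, i.e. $i$ is a closed immersion.
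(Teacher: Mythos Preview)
Your outline has two genuine gaps, both stemming from the same wrong claim: you assert that $f^* = -\otimes_A A/J$ is exact, but it is only right exact. This breaks your arguments for parts (4), (3), and (5).

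For (4), the ``dual of Lemma~\ref{lem.quotient.adjoint}'' you invoke does not exist: the opposite of a Grothendieck category is not Grothendieck, so you cannot formally dualize. The paper instead applies Lemma~\ref{lem.quotient.adjoint} directly to $F=f^*$ (with $\sA=\Gr(A)$, $\sB=\Gr(A/J)$), but for that one must verify that $L_1(\pi' f^*)$ vanishes on $\Fdim(A)$, i.e.\ that $\Tor^A_1(T,A/J)\in\Fdim$ whenever $T\in\Fdim$. Since $\Tor^A_1(T,A/J)$ embeds in $T\otimes_A J$, this reduces to showing $T\otimes_A J\in\Fdim(A)$, which is the one place where \emph{left} noetherianity of $A$ is actually used (write $J=Ax_1+\cdots+Ax_s$ with the $x_i$ homogeneous of bounded degree). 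You never address this; it is the key nontrivial verification. Once it is done, Lemma~\ref{lem.quotient.adjoint}(5) applied to $f^*\dashv f_*$ gives $i^*\dashv i_*$, and your argument for (3) via $i^*i_*\cong\id$ then goes through, since Lemma~\ref{lem.quotient.adjoint}(1) for $F=f^*$ yields $\pi' f^*\cong \pi' f^*\omega\pi$ and hence $i^*i_*\cong \pi' f^* f_*\omega'\cong \pi'\omega'\cong\id$.

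For (5), your argument requires $i^*$ left exact, which fails. The paper argues concretely: it first proves the identity $\omega\pi f_*M\cong f_*\omega'\pi'M$ for $M\in\Gr(A/J)$ (using again that $T\otimes_A J\in\Fdim$ to show $(\omega\pi M)J=0$), so that $\omega i_*\cM$ is annihilated by $J$; then any subobject $\cL\hookrightarrow i_*\cM$ has $\omega\cL$ annihilated by $J$, hence $\omega\cL=f_*L$ and $\cL\cong\pi f_*L=i_*\pi'L$, and similarly for quotients. Your purely categorical attempt via $N\to i_*i^*N$ does not work without knowing the counit is an isomorphism on subobjects of objects in the image, which is exactly what is at issue.
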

\begin{proof}
{\bf (1)}
This is due to P. Gabriel  \cite[Cor. 2, p. 368]{Gab}  and \cite[Cor. 3, p. 369]{Gab}. In particular, the diagram
\begin{equation}
\label{diag.i_*.comms}
\xymatrix{
\Gr (A/J) \ar[r]^{f_*}  \ar[d]_{\pi'}  & \Gr (A)  \ar[d]^{\pi}
\\
\QGr(A/J) \ar[r]_{i_*}  &  \QGr(A)
}
\end{equation}
commutes.  

{\bf(2)}
Let $\eta':\id_{\Gr(A/J)} \to \omega'\pi'$ and $\ve':\pi'\omega' \to \id_{\QGr(A/J)}$ be the unit and count 
associated to the adjoint pair $\pi'\dashv \omega'$. Since $\ve'$ is an isomorphism, by \cite[Prop. 3(a), p. 371]{Gab}, the natural transformation $i_* \to i_*\pi'\omega'$
it induces is also an isomorphism. But $i_*\pi'\omega'=\pi f_* \omega'$ so $i_* \cong \pi f_* \omega'$.

{\bf (3)}
 We need two preliminary results before completing  the proof of (3). 

{\bf (3a)}: If $T \in \Fdim(A)$, then $T \otimes_A J \in \Fdim(A)$.

\underline{Proof}: 
Since $A$ is left noetherian, $J=Ax_1+\cdots+Ax_s$ for some homogeneous elements $x_i \in J$. There is an integer
$d$ such that $\deg(x_i) \le d$ for all $i=1,\ldots,s$.  

Let $t \in T$ and $x \in J_m$.  There is an integer $\ell$ such that $tA_{\ge \ell}=0$. 
 
Let $a \in A_{\ge  \ell+d-m}$. Then $xa=a_1x_1+\cdots+a_sx_s$ for some elements 
$a_i \in A_{\deg(xa)-\deg(x_i)}\subseteq A_{\ge  \ell}$. Since $ta_1=\cdots =ta_s=0$,
$(t \otimes x)a=ta_1 \otimes x_1+\cdots + ta_s \otimes x_s =0$. We have shown that $(t \otimes x)A_{\ge \ell+d-m}=0$.
It follows that every element in $T \otimes _AJ$ generates a finite dimensional module, i.e., $T \otimes_A J \in \Fdim(A)$.
 $\lozenge$


{\bf (3b)}: If  $M \in \Gr( A/J)$, then $\omega\pi f_*M \cong f_*\omega'\pi'M$.

\underbar{Proof}: 
Let  $\overline{M}$ denote the image of $M$ under the natural map $M \to \omega\pi M$. 
 By definition, $\omega\pi M$ is ``the'' largest essential extension
$0 \to \overline{M} \to \omega \pi M \to T \to 0$  in $\Gr(A)$ such that $T \in \Fdim (A)$.

In the next diagram $\psi$ and $\phi$ are the multiplication maps.
The top row in 
  $$
\UseComputerModernTips
\xymatrix{
\overline{M} \otimes_A J \ar[r]^<<<<<\a  \ar[d]_\psi &  (\omega\pi M) \otimes_A J \ar[r]^<<<<<\b \ar[d]^\phi & T \otimes_A J \ar[r] & 0
\\
\overline{M}J \ar[r]_\theta & (\omega\pi M)J
}
$$
is exact. Since $\overline{M}J=0$, there is a homomorphism $\gamma:T \otimes_A J \to
(\omega \pi M)J$ such that $\gamma\b=\phi$. Since $\phi$ is surjective so is $\c$.

Since $T \otimes_A J \in \Fdim(A)$ and $\c$ is surjective, $(\omega \pi M)J \in \Fdim(A)$ also. This
implies that $\overline{M} \cap (\omega \pi M)J \in \Fdim(A)$. But the only submodule of $\omega\pi M$ that belongs to $\Fdim(A)$ is the zero submodule, so $\overline{M} \cap (\omega \pi M)J =0$. But $\overline{M}$ is essential in 
$\omega\pi M$ so  $(\omega \pi M)J=0$, i.e.,  $\omega\pi M \in \Gr(A/J)$.

Thus, $\omega\pi M$ is ``the'' largest essential extension of $\overline{M}$ in $\Gr(A/J)$ such that 
$(\omega\pi M)/\overline{M}$ is in $\Fdim (A/J)$.
Hence $\omega \pi M$ and $\omega'\pi' M$ are isomorphic as $A/J$-modules.  More 
precisely, $\omega \pi f_*M \cong f_*\omega'\pi' M$.  This completes the proof.
$\lozenge$

 {\bf (3c)}: 
The functor $i_*$ is  fully faithful. 

\underbar{Proof}: 
Let $\a$ be a non-zero morphism in $\QGr(A/J)$. Then $\pi'\omega'(\a) \ne 0$. The image of $\omega'(\a)$ is therefore
not in $\Fdim(A)$. Hence the image of $f_*\omega'(\a)$ is not in $\Fdim(A)$. It follows that $\pi f_*\omega'(\a) \ne 0$.
But $\pi f_*\omega'=i_*$ so $i_*$ is faithful. 

To see that $i_*$ is full, let  $\cM,\cN \in \QGr(A/J)$ and suppose that $\a:i_*\cM \to i_*\cN$ is a morphism. 

There are $A/J$-modules $M$ and $N$ such that $\cM=\pi' M$ and $\cN=\pi'N$. 
Fix isomorphisms $\theta_M:f_*\omega'\pi' M \to \omega \pi f_*M$ and $\theta_N:f_*\omega'\pi' N \to \omega \pi f_*N$.

Then  $\pi(\theta_N)\circ \a \circ \pi(\theta_M)^{-1}$ is in $\Hom_{\QGr(A)}(\pi\omega\pi f_*M,\pi\omega\pi f_*N)$.
By \cite[Lemme 1, p.370]{Gab}, $\pi(\theta_N)\a \pi(\theta_M)^{-1}=\pi(\b)$ for some 
$\b \in \Hom_{\Gr(A)}(\omega\pi f_*M,\omega\pi f_*N)$. Therefore  $\a=\pi(\theta_N^{-1}\b\theta_M)$. Since $f_*$ is full,
the map $\theta_N^{-1}\b\theta_M: f_*\omega'\pi' M \to f_*\omega'\pi' N$ is equal to $f_*(\c)$ for some $\g
\in \Hom_{\Gr(A/J)}(\omega'\pi' M,\omega'\pi' N)$. Hence $\a=\pi f_*(\c)$. Since $\pi f_*=i_*\pi'$, $\a=i_*\pi'(\c)$
which completes the proof that $i_*$ is full. 
$\lozenge$

{\bf (4)} 
We separate the proof into two parts.

{\bf (4a)}
In this part we show that $i^*:=\pi' f^* \omega$ is left adjoint to $i_*$.  

If $N$ is a finite dimensional graded right $A$-module, then $N \otimes_A (A/J)$ is also finite dimensional. 
Since $f^*=- \otimes_A (A/J)$ it commutes with direct limits 
and therefore sends $\Fdim(A)$ to $\Fdim(A/J)$. Hence $\pi'f^*$ vanishes on $\Fdim(A)$.

We wish to apply Lemma \ref{lem.quotient.adjoint} with $F= f^*$, $\sA=\Gr(A)$, and $\sB=\Gr(A/J)$. 
To see that $F=f^*$ satisfies the hypotheses of Lemma \ref{lem.quotient.adjoint} we first note that $f^*$ is right exact 
and sends $\Fdim(A)$ to $\Fdim(A/J)$. The left derived functor $L_1(\pi'f^*)$ exists because $\Gr(A)$ has enough
projectives. We must show that $L_1(\pi'f^*)$  vanishes on $\Fdim(A)$.  
 Since $\pi'$ is exact, $L_1(\pi' f^*) \cong \pi' \circ L_1f^* \cong \pi' \circ \Tor^A_1(-,A/J)$ 
so it suffices to show that $\Tor^A_1(T,A/J)$ is in $\Fdim(A/J)$ whenever $T \in \Fdim(A)$.  
But $\Tor^A_1(T,A/J)$ is isomorphic to a submodule of $T \otimes_A J$ which is, by (3a), in $\Fdim(A)$.

By Lemma \ref{lem.quotient.adjoint}(5), there is a unique functor $G:\QGr(A) \to \QGr(A/J)$ such that 
$\pi'f^*=G\pi$. Furthermore $G \cong  \pi'f^*\omega$. Since $f^* \dashv f_*$,  Lemma \ref{lem.quotient.adjoint}(2)
tells us that $G\dashv \pi f_*\omega'$, i.e., $i_*$ is right adjoint to $i^*$. 

{\bf (4b)}
In this part we show that $i^!:=\pi'f^!\omega$ is right adjoint to $i_*$.
 
To do so we will apply Lemma \ref{lem.quotient.adjoint} to the functor $F= f_*$ with $\sA=\Gr(A/J)$ and $\sB=\Gr(A)$.
To see that $F=f_*$ satisfies the hypotheses of Lemma \ref{lem.quotient.adjoint} we first note that $f_*$ is right exact, in fact exact, and
sends $\Fdim(A/J)$ to $\Fdim(A)$. Hence $\pi f_*$ is right exact, in fact exact, and 
 vanishes on $\Fdim(A/J)$. Since $\pi f_*$ is exact its left derived functor $L_1(\pi f_*)$  
 certainly vanishes on $\Fdim(A/J)$. Lemma \ref{lem.quotient.adjoint} therefore applies. The functor $G$ in part (2) of 
 Lemma \ref{lem.quotient.adjoint} is $i_*$.
  Since $f_* \dashv f^!$, Lemma \ref{lem.quotient.adjoint}(5) tells us that $G \dashv \pi' f^!\omega$, i.e., $i_* \dashv \pi' f^!\omega$.

{\bf (5)}  
We will now show that  $i_*(\QGr(A/J))$ is closed under submodules and quotients in $\QGr(A)$.

Let $\cM \in \QGr(A/J)$ and suppose that $0 \to \cL \to i_*\cM \to \cN
\to 0$ is an exact sequence in $\QGr(A)$. Let $N$ denote the image and $T$ the cokernel of the map $\omega( i_*\cM \to \cN)$.
Because $\pi\omega \cong \id_{\QGr(A)}$, the map $\pi \omega( i_*\cM \to \cN)$ is an epimorphism.
Hence $\pi T=0$; i.e., $T \in \Fdim(A)$.  

 Because $\pi$ is exact and $T \in \Fdim (A)$,    $\cN \cong \pi N$. 
Now $\cM=\pi' M$ for some $M \in \Gr (A/J)$, so 
$\omega i_* \cM = \omega i_* \pi' M = \omega \pi f_* M \cong 
f_*\omega'\pi' M$. Since $J$ annihilates $f_*\omega'\pi' M$ it annihilates $\omega i_* \cM$. 
But $\omega$ is left exact so $\omega \cL$ is isomorphic to a submodule of $\omega i_*\cM$ and is 
therefore annihilated by $J$. Hence $\omega\cL=f_*L$ for some $L \in \Gr (A/J)$.
Thus, $\cL \cong \pi\omega \cL = \pi f_*L = i_* \pi'L \in
i_*(\QGr(A/J))$. Since $N$ is a quotient of $\omega i_*\cM$ it is also annihilated by $J$; hence we can
apply the argument we just used for $L$ to $N$ and deduce that $\cN \in i_*(\QGr(A/J))$.

That completes the proof of (5) and hence the proof of the theorem (see 
\cite[p.2928]{S} for the definition of a closed immersion).  
\end{proof}

{\bf Remarks.}
(1)
The three functors $(i^*,i_*,i^!)$ provide a ``map''  which we call $i$.
Since $i_*$ is faithful and has a right adjoint, the map $i$ is an affine map in the sense of
Rosenberg \cite[p.278]{R}.  

(2)
It is common in the above situation to define the ``structure sheaves''   of $X=\Projnc(A)$
and $Z=\Projnc(A/J)$  to be $\cO_X:=\pi A$  and $\cO_Z:=\pi'(A/J)$.
The map $i:Z \to X$ then has the property that $i^*\cO_X =\cO_Z$.

(3)
Suppose that $J \subseteq J' \subseteq A$ are graded 2-sided ideals. Let $(f^*,f_*,f^!)$, $(g^*,g_*,g^!)$, and $(h^*,h_*,h^!)$, 
be the adjoint triples associated to the homomorphisms $A \to A/J$, $A/J \to A/J'$, and $A \to A/J'$, respectively.
Consider the diagram
$$
\xymatrix{
\Gr (A/J') \ar[r]^{g_*}  \ar[d]_{\pi''}  & \Gr (A/J) \ar[r]^{f_*}  \ar[d]_{\pi'}  & \Gr (A)  \ar[d]^{\pi}
\\
\QGr(A/J') \ar[r]_{j_*}  & \QGr(A/J) \ar[r]_{i_*}  &  \QGr(A)
}
$$
where $i_*$ and $j_*$ are the unique functors such that $i_*\pi'=\pi f_*$ and $j_*\pi'' = \pi g_*$ and $\pi''$ is the quotient
functor. Since $h_*=f_*g_*$, $i_*j_*\pi'' = \pi h_*$ so $i_*j_*$ is the unique functor $k_*$ such that $\pi h_*=k_*\pi''$.
If we define $k^*=\pi'' h^* \omega$, we can not conclude that $k^*=j^*i^*$. However, since $j^*i^*$ is left adjoint to $i_*j_*=k_*$ we know that $k^* \cong j^*i^*$.

\begin{theorem}
\label{thm2}
Let $J$ be a two-sided graded ideal in a connected graded $k$-algebra  $A$.
Let $X$ be a noetherian scheme having an ample line bundle; for example, let $X$ be 
a quasi-projective scheme over $\Spec(k)$.  
If  there is an equivalence of categories $\Phi:\QGr(A) \to \Qcoh(X)$,  then $\QGr(A/J)  \equiv \Qcoh(Z)$ for some closed subscheme $Z \subseteq X$. 
More precisely, there is a  commutative diagram
$$
\xymatrix{
 \QGr(A/J)  \ar@{-->}[d]_{\equiv} \ar[rr]^{i_*} && \QGr(A) \ar[d]^\Phi
\\ 
\Qcoh(Z) \ar[rr]  && \Qcoh(X)
}
$$
in which the horizontal arrows are the natural inclusion functors and the dotted arrow is an equivalence  
between $ \QGr(A/J) $ and  the essential image of $\Phi i_*$.
\end{theorem}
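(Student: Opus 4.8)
The plan is to combine Theorem \ref{thm.3.2} with the structure theory of categories of the form $\Qcoh(X)$ for $X$ a noetherian scheme with an ample line bundle. Theorem \ref{thm.3.2} already produces an adjoint triple $(i^*,i_*,i^!)$ with $i_*:\QGr(A/J)\to\QGr(A)$ fully faithful, exact, commuting with direct sums, and with essential image closed under subobjects and quotients. Transporting $i_*$ across the equivalence $\Phi$ gives a fully faithful exact functor $\Psi:=\Phi i_*:\QGr(A/J)\to\Qcoh(X)$ whose essential image $\cC$ is a full subcategory of $\Qcoh(X)$ closed under subobjects, quotients, and arbitrary direct sums (the last because $i_*$ commutes with direct sums and $\Phi$ is an equivalence). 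So the whole problem reduces to the following: \emph{a full subcategory $\cC\subseteq\Qcoh(X)$ that is closed under subobjects, quotients and direct sums, and is itself a Grothendieck category equivalent to $\QGr(A/J)$, is of the form $\Qcoh(Z)$ for a closed subscheme $Z\subseteq X$.}

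First I would identify $\cC$ concretely. A full subcategory of $\Qcoh(X)$ closed under subobjects and quotients is, by general nonsense, determined by its ``support'': let $U\subseteq X$ be the largest open subset such that $\cO_U$ (more precisely, every sheaf supported set-theoretically off the complement) does \emph{not} lie in $\cC$ — equivalently, take $Y$ to be the union of the supports of objects of $\cC$, which is a closed subset since $\cC$ is closed under direct sums and $X$ is noetherian (quasi-compactness of $X$ forces the supports to stabilize). Closure under subobjects and quotients means $\cC$ is a Serre subcategory of $\Qcoh(X)$; closure under direct sums (and the fact that $X$ is noetherian, so $\Qcoh(X)$ is locally noetherian) makes it a \emph{localizing} subcategory that is moreover \emph{closed under injective envelopes} is not automatic, so instead I would argue directly that $\cC$ is the category of quasi-coherent sheaves supported on a closed subset $Y$. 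The key input here is the ampleness of the line bundle $\cL$ on $X$: every coherent sheaf supported on $Y$ is a quotient of a finite direct sum of negative powers of $\cL$ twisted down appropriately, hence — once we know $\cL^{\otimes -n}\otimes\cO_Y$ (or rather the relevant twists of structure sheaves of closed subschemes with underlying space $Y$) lie in $\cC$ — closure under quotients and direct sums forces all such sheaves into $\cC$, and conversely $\cC$ clearly contains nothing supported off $Y$.

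To promote the closed \emph{subset} $Y$ to a closed \emph{subscheme} $Z$, I would use that $\cC$ is an abelian category with a projective generator's dual, i.e.\ an injective cogenerator, coming from $\QGr(A/J)$; more usefully, $\cC$ has a canonical ``smallest'' object generating it, and I would take $Z$ to be defined by the ideal sheaf $\cI_Z\subseteq\cO_X$ consisting of local sections $a$ such that multiplication by $a$ kills every object of $\cC$ (equivalently, kills a generator). That $\cI_Z$ is quasi-coherent and that $\cC=\Qcoh(Z)$ then follows from ampleness again: the category $\Qcoh(Z)$ for $Z=V(\cI_Z)$ is exactly the quasi-coherent sheaves annihilated by $\cI_Z$, and one checks both inclusions. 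Finally, the commutative square is built by observing that $\Phi i_*$ lands in $\cC=\Qcoh(Z)$ by construction, so it factors as (an equivalence onto its essential image, which I must check is all of $\Qcoh(Z)$, using that $\QGr(A/J)$ is already equivalent to $\Qcoh(Z')$ for \emph{some} closed subscheme by this argument applied abstractly — here the essential image is a localizing-and-coreflective subcategory of $\Qcoh(Z)$ equal to all of it) followed by the inclusion $\Qcoh(Z)\hookrightarrow\Qcoh(X)$, and the square commutes by definition of $\Psi$.

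\textbf{Main obstacle.} The delicate point is the second step: upgrading the abstract Serre-plus-direct-sums subcategory $\cC$ to $\Qcoh(Z)$ for an actual closed subscheme. The closedness of the support $Y$ uses noetherian-ness essentially, and recovering the scheme structure — not just the reduced subscheme on $Y$ — requires that $\cC$ ``see'' the nilpotents, which is where I expect the equivalence with $\QGr(A/J)$ (as opposed to merely $\cC$ being Serre) must be used, together with the ample line bundle to reconstruct $Z$ from $\cC$ via a Serre-type theorem $Z\cong\Projnc$ or directly as $V(\cI_Z)$. I would want to invoke a reconstruction result (Gabriel–Rosenberg style: a noetherian scheme is determined by its category of quasi-coherent sheaves) to make the passage from $\cC$ to $Z$ canonical, and then ampleness to guarantee $Z$ is a closed subscheme of $X$ rather than merely an abstract scheme mapping to $X$.
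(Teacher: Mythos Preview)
Your high-level strategy coincides with the paper's: use Theorem~\ref{thm.3.2} to exhibit $\QGr(A/J)$ as a closed subspace of $\QGr(A)\simeq\Qcoh(X)$, and then invoke the fact that for $X$ noetherian with an ample line bundle the abstract closed subspaces of $\Qcoh(X)$ are precisely the $\Qcoh(Z)$ for closed subschemes $Z\subseteq X$. The paper simply cites this last fact as \cite[Thm.~4.1]{S2}; you instead attempt to reprove it inline, and that is where the differences and the problems lie.

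There is a concrete error in your sketch. You claim you would ``argue directly that $\cC$ is the category of quasi-coherent sheaves supported on a closed subset $Y$.'' This is false as stated: the category of quasi-coherent sheaves with set-theoretic support in $Y$ contains every nilpotent thickening (e.g.\ $\cO_X/\cI_Y^n$ for all $n$) and is \emph{not} of the form $\Qcoh(Z)$ for any closed subscheme. That subcategory is strictly larger than any $\Qcoh(Z)$ unless $Y$ is open-and-closed. So the support detour does not lead where you want; closure under subobjects, quotients, and direct sums alone does not pin down a scheme structure, and you need to use the full closed-subspace package (in particular the \emph{right} adjoint $i^!$, which forces $\cC$ to be coreflective and hence cut out by an honest annihilator rather than by support).

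Your second approach, defining $\cI_Z$ as the annihilator ideal sheaf of a generator of $\cC$, is the correct one and is essentially how \cite[Thm.~4.1]{S2} is proved. But your write-up conflates the two approaches and never actually checks that $\cC=\Qcoh(Z)$ for $Z=V(\cI_Z)$; the inclusion $\cC\subseteq\Qcoh(Z)$ needs the right adjoint, and the reverse inclusion needs ampleness plus the fact that $\cC$ is closed under quotients and direct sums. The appeal to Gabriel--Rosenberg reconstruction at the end is a red herring: you do not need to reconstruct an abstract scheme, only to verify that $\cC$ coincides with the concrete subcategory $\Qcoh(V(\cI_Z))$ of $\Qcoh(X)$. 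In short, the paper's route (cite the classification of closed subspaces) is both shorter and avoids the pitfall you fell into; if you want to argue directly, drop the support discussion entirely and work with the annihilator from the start.
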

\begin{proof}
By Theorem \ref{thm.3.2},  $\Projnc(A/J)$ is a closed subspace of $\Projnc(A)$. 
By \cite[Thm. 4.1]{S2}, the closed subspaces of $X$, or $\Qcoh(X)$, are 
the same things as closed subschemes of $X$ in the classical sense.
Hence there is a closed subscheme $Z \subseteq X$
such that   $\QGr(A/J) \equiv \Qcoh(Z)$.  
\end{proof}

{\bf Acknowledgement.}
I thank James Zhang for many conversations over many years about the ideas in this 
corrigendum, and Alex Chirvasitu for questions and conversations that inspired Theorem \ref{thm2}.
I thank the referee for reading the previous draft with care and for identifying some errors and suggesting 
improvements.

\end{document}